\newtheorem{theorem}{Theorem}
\newtheorem{lemma}{Lemma}
\newcommand{\Reals}{{\mathbb R}}
\newcommand{\prob}{{\mathbb P}}
\begin{document}
%

\title{Probabilistic consensus via polling and majority rules}

\author{James Cruise and Ayalvadi Ganesh}

\maketitle

\begin{abstract}
In this paper, we consider lightweight decentralised algorithms for
achieving consensus in distributed systems. Each member of a
distributed group has a private value from a fixed set consisting of,
say, two elements, and the goal is for all members to reach consensus
on the majority value. We explore variants of
the voter model applied to this problem. In the voter model, each node
polls a randomly chosen group member and adopts its value. The process
is repeated until consensus is reached. We generalize this so that
each member polls a (deterministic or random) number of other group
members and changes opinion only if a suitably defined super-majority
has a different opinion. We show that this modification greatly speeds
up the convergence of the algorithm, as well as substantially reducing
the probability of it reaching consensus on the incorrect value. 
\end{abstract}

\section{Introduction} \label{sec:intro}

The consensus problem is that of getting all agents in a distributed system to agree on a value from an initial set of values that they each possess. We consider the special case where this initial set consists of just two values, 0 and 1. Whereas the traditional consensus problem only requires agreement, we shall also require that the agreed value coincide with the initial majority value. Our focus is on simple and lightweight decentralized algorithms which offer probabilistic performance guarantees, and which achieve fast convergence to the consensus value.

There are two strands of motivation for this work. The first comes from a variety of co-ordination problems in distributed systems where the main goal is to achieve agreement and the value agreed upon is only of secondary importance. The second motivation comes from social learning, where the interest is in whether agents with limited private information can aggregate their information via simple interaction rules to learn the ``true state of nature". This phenomenon has gained considerable popularity recently under the name of \emph{the wisdom of crowds} \cite{crowds}. It is also of interest in modelling the spread of opinions and influence in social networks, both online and offline, and the spread and adoption of competing ideas or technologies.

In the standard voter model, which is one of the earliest models of consensus, each agent initiates contacts at unit rate, i.e., at the points of a Poisson process of rate 1, choosing the contact uniformly at random in each instance. Contacts are chosen independent of past choices, and the choices of other agents. The agent initiating the contact then simply adopts the opinion of the agent contacted. This is a \emph{pull} model, though the \emph{push} version is exactly analogous. The number of agents having each value evolves as a Markov chain. In fact, the number of agents having value 1, say, is a martingale. Hence, the probability of reaching consensus on the value 1 is simply the fraction of nodes which initially had value 1. In particular, if this initial fraction is smaller than a half, then the probability of reaching the wrong consensus, i.e., agreeing on the minority value, is a constant that does not depend on the number of agents but only on the initial fraction. Moreover, it is known that the expected time to reach consensus grows linearly in the total number of agents. 

In a variant of this model, the agents live on the nodes of a graph and an agent can only contact its neighbours in the graph. Again, we consider a Markov model in which agent (or node) $i$ contacts node $j$ according to a Poisson process of rate $q_{ij}$, independent of the past and of other nodes. We assume that the rate matrix $Q=\{ q_{ij}, 1\le i,j\le n \}$, is irreducible, which is the case if the corresponding directed graph is strongly connected. Here $n$ is the total number of agents. Let $\pi$ denote the unique invariantdistribution of the rate matrix $Q$, i.e., the unique probability vector solving the equation $\pi Q=0$. Then it can be shown that $\pi \cdot X(t)$ is a martingale, where $X_i(t)$ denotes the value at node $i$ at time $t$, and $\pi \cdot X(t)$ denotes the inner product of the vectors $\pi$ and $X(t)$. (The analogous result for a discrete-time version of this model was established in \cite{peleg}, but the extension to continuous time is straightforward.) Consequently, the probability that the eventual consensus value is 1 is simply $\pi \cdot X(0)$. In this setting, agents have different amounts of influence as described by the invariant distribution $\pi$, but the results are otherwise quite similar to the case of the classical voter model. In particular, the probability of reaching the wrong consensus is substantial, and typically doesn't decay in the system size. 

\subsection{ Related work}

There has been a considerable body of recent work involving variants of the classical voter model, 
some of which has been motivated by the distributed consensus problem.
A simple 3-state variant of the voter model on the complete graph was studied in \cite{milan}, where the third state can be thought of as undecided. If an agent with value 0 polls one with value 1 or vice versa, then it moves to the undecided state. If an agent polls a node in the undecided state, then it doesn't change state. Finally, an undecided agent adopts the value of the agent it polls. It is shown in \cite{milan} that, for this model, the probability of reaching the wrong consensus decays exponentially in the number of agents, $n$, and the decay rate is the Kullback-Leibler divergence between the Bernoulli distribution with parameter equal to the initial fraction of agents with value 1, and the Bernoulli distribution with parameter one half. Moreover, the time to reach consensus is logarithmic in the number of nodes. One could ask whether having additional states further reduces the error probability or, alternatively, whether the same gain can be achieved if only a fraction of nodes have these extra states. Rather than answer these questions in the multiple-states setting, we continue to work with just two states but consider more general polling schemes. We show that polling two or more nodes rather than just one, and only changing state if a super-majority of the polled nodes have a different state, leads to error probabilities decaying exponentially in the number of agents, and in the time to reach consensus being logarithmic in the number of agents.

A discrete-time version of the majority voter model on infinite regular trees was studied in \cite{montanari}. 
In this model, nodes are initially assigned values of 0 or 1 according to independent and identically distributed (i.i.d.) 
Bernoulli random variables. This is the only randomness in the model. The subsequent evolution is deterministic; in 
each time step, each node updates its value to the majority value represented among its neighbours in the tree. 
The authors obtain bounds on the minimum initial bias (difference between probabilities of 0 and 1) required to 
guarantee that all nodes eventually reach consensus. Similar results are obtained in \cite{mossel} for finite regular 
graphs that are expanders. Specifically, the authors show that, for sufficiently large initial bias, consensus on the initial 
majority value is reached with probability tending to 1 as the size of the graph tends to infinity. A probabilistic version 
of the majority voter model, in which each agent randomly samples a subset of its neighbours in each time step and 
adopts the majority value in this subset, is studied in \cite{moaz}. Here, there is randomness in the initial condition 
as well as the sampling at each node and time step. The authors show that the model reaches consensus on the
initial majority value with high probability. The error probability bounds in their model are not as good as the ones in 
this paper, but they are applicable to a wide class of graphs, whereas our results are only for complete graphs.

There is a well known duality between the voter model and coalescing random walks, which can be 
exploited to obtain bounds on the time to reach consensus in the voter model. On the complete graph, it shows 
that the expected time to reach consensus scales linearly in the number of nodes. Recent work in \cite{cooper} 
has used this approach to derive bounds on the consensus time in general graphs in terms of the number of 
nodes, the variance of node degrees, and the spectral gap of the transition probability matrix for the random walk 
associated with the voter model. Unfortunately, the duality with random walks does not extend to the generalised 
voter model considered in this paper. There also doesn't seem to be a natural martingale associated with it; in the 
voter model, this martingale is intimately related to the dual random walk. Hence, we need a quite different approach 
for our model, which doesn't extend to general graphs.

Consensus can also be reached without using variants of voter models. Instead, agents could simply count the number of agents with values 0 and 1, and thereby choose the majority. It is shown in \cite{shah+aoyama} that such 
counting problems can be approximated using a gossip algorithm which involves propagating real numbers rather than 
values from a finite set. If the approximation error in counting is smaller than the margin between the 0 and 1 votes, then this leads to the correct decision regarding the initial majority value. The time required by this approximate counting algorithm is shown in \cite{shah+aoyama} to scale logarithmically in the number of nodes in complete graphs, whereas 
on general graphs, it can be bounded in terms of the expansion properties of the graph. 

In contrast, quantized gossip algorithms, which use more than two states and more complicated update rules, can be constructed to guarantee convergence to the correct value \cite{kashyap, benezit, alex+john}. Bounds on the convergence time of these algorithms have been obtained in \cite{milan2, shang}.

 Finally, we briefly mention the relevance of the work in this paper to distributed systems. We present 
results only for the complete graph, but even these can be relevant to distributed systems provided the update 
algorithms described here are applied to nodes selected uniformly at random from the population. Such sampling 
is possible by performing independent random walks on the network, one for each sample required. The number 
of steps required to obtain a nearly uniform random sample depends on the mixing time of the random walk, 
which is in turn related to the spectral gap of the Laplacian matrix of the graph. The key point is that if the graph 
is an expander, as many networks of practical interest are, then the random walk mixes quickly and it is typically 
possible to randomly sample a node with cost which is logarithmic in the number of nodes.

The rest of the paper is organized as follows. In the next section, we set out the variants of the voter model that we shall consider. We present an analysis of their performance, in terms of the time to reach consensus and the probability of reaching consensus on the wrong value, in Sections 3 and 4, before concluding in Section 5 with a discussion of directions for future work.

\section{Generalized voting algorithm} \label{sec:voting_algo}

We study voting algorithms on the complete graph on $N$ nodes. Each node has an initial state in $\{0,1\}$. 
We associate a unit rate exponential clock with each node, which is independent of its state and the states of 
the other nodes. At each tick of this clock, the node carries out an update of its current state according to an 
algorithm indexed by two parameters, $m$ and $d$. Here, $m$ represents the number of nodes the node polls 
in an update step, and $d$ is the minimum number of these that will need to disagree with its current state for it 
to change state. More precisely, the update algorithm, algorithm \ref{alg}, at each node $v$ is as follows: the 
node samples $m$ nodes $v_1, v_2,\ldots, v_m$ uniformly at random and finds out their current states. 
If $d$ or more have a current state different from its own, then $v$ changes its current state; otherwise, 
it retains its state.  For example, if $m=3$ and $d=2$, then each node samples 3 nodes and only changes 
state if a simple majority of them have a different state from its own. The classical voter model is recovered 
by taking $m=d=1$. For definiteness, and to simplify the analysis, we assume that these $m$ nodes are 
sampled with replacement. Sampling without replacement will not affect our results, which are of an asymptotic 
character for large populations.  
For large $N$ the probability of sampling a node twice in an update step is $O(\frac{1}{N})$. 
We can simulate sampling without replacement by first sampling with replacement, but rejecting the sample 
and resampling if a node is repeated. This coupling demonstrates that the proportion of update steps in which 
sampling with replacement yields a different sample than sampling without replacement tends to zero as we 
increase the number of nodes. Intuitively, this suggests that both sampling schemes should result in convergence 
to the same consensus value. We shall return to this point after the statement of Theorem \ref{hitprob}.

\begin{algorithm}
 \SetAlgoLined
 \KwData{$X$ state of each node}
 $i \gets$ sample ($1,n$) \tcc*[r]{select node to update uniformly at random}
 $\sigma\gets$ sample($m,N$) \tcc*[r]{sample $m$ nodes from the population with replacement}
 $count \gets \sum_{j\in \sigma} X[j]\; $ \tcc*[r]{count nodes in sample with state 1}
 \uIf{ $X[i]=0$ \& $count\geq d$}{
 $X[i] \gets 1$\;
 }
\ElseIf{$X[i]=1$ \& $m-count\geq d$}{
 $X[i] \gets 0$\;
 }

 \caption{$(m,d)$ update algorithm}\label{alg}
\end{algorithm}

If we define the state of the system to be the number of nodes in state 1, then it is easy to see that it evolves as a 
continuous time Markov chain on the finite state space $\{0,1,2,\ldots,N\}$, with the states $0$ and $N$ being absorbing. 
These are the only absorbing states and they are reachable from all other states, and so the Markov process will eventually end up in one of them. We are interested in the probability of the system ending in state $N$ (i.e., all nodes in state 1) if we started with a proportion $\alpha$ of the nodes initially having state $1$. We are also interested in the expected length of time it takes to reach a consensus, i.e., to reach an absorbing state. 

In the case of the classical voter model, it is known that the probability of being absorbed into the state $N$ is equal to the proportion of nodes which  start in state 1, and that the time to consensus scales linearly with the number of nodes, as remarked in the Introduction. Thus, the probability of reaching the wrong consensus does not decay with system size but only with how far the initial proportion is from an equal split. Moreover, the time to consensus is large. One of our motivations for introducing the generalized class of algorithms is to see if they can reduce the probability of converging to the wrong consensus as well as speed up the convergence. Another motivation is to explore the behaviour of a wider class of social learning mechanisms.

In the next section, we examine the performance of algorithms where the parameters $m$ and $d$ are fixed, and the same at every node. In the following section, we consider random $m$ and $d$ as a way of modelling heterogeneity among the nodes, which has strong motivation in a social learning context. The ideas and techniques we use for the deterministic case transfer across with minor changes. We present some conjectures and future directions for research in the final section.

\section{Analysis of the deterministic case} \label{analysis_det}
\subsection{Probability of reaching the incorrect consensus} \label{analysis_det_prob}

We consider the generalized voter model with parameters $(m,d)$ as described in the previous section. Let $X(t)$ denote the number of nodes in state $1$ at time $t$. Then $(X(t), t\in \Reals)$ evolves as a continuous time Markov chain on $\{0,1,2,\ldots,N\}$ with the transition rates
\begin{eqnarray*}
q_{n,n+1} &=& (N-n) \prob \Bigl( \mbox{Bin}\Bigl(m,\frac{n}{N}\Bigr) \ge d \Bigr), \\
q_{n,n-1} &=& n \prob \Bigl( \mbox{Bin}\Bigl(m,\frac{N-n}{N}\Bigr) \ge d \Bigr),
\end{eqnarray*}
for $0\le n\le N$. We are interested in computing the probability that the system ends in state $N$ given that we start with a proportion $\alpha$ of nodes in state $1$.

We denote this hitting probability by the function $h_N$, defined as follows.  For $\alpha = i/N$ with $i \in \{0,1,\ldots,N\}$, set
\begin{equation} \label{hndef_int}
h_N(\alpha)=\mathbb{P}(X(\infty) =N | X(0)= \lfloor \alpha N \rfloor),
\end{equation}
and extend the definition to $\alpha \in [0,1]$ by linear interpolation:
\begin{equation} \label{hndef_gen}
h_N(\alpha) = (N\alpha - \lfloor N \alpha \rfloor) h_N \Bigl( \frac{ \lceil \alpha N \rceil }{N} \Bigr)
+ (\lceil N \alpha \rceil - N\alpha) h_N \Bigl( \frac{ \lfloor \alpha N \rfloor }{N} \Bigr).
\end{equation}

We begin by considering the special case $d=m$, where a node only changes states if all $m$ nodes it samples have the opposite state. 
\begin{theorem}\label{mmprob}
Consider the $(m,m)$ algorithm with $m>1$ on the complete graph on $N$ nodes. Let $\{X(t)\}_{t\geq0}$ be the number of nodes whose state is 1 at time $t$ then we have
$$\mathbb{P}(X(\infty) =N | X(0)=i) =\sum_{k=0}^{i-1}  {N-1\choose k}^{m-1} \Bigm/ \sum_{k=0}^{N-1} {N-1\choose k}^{m-1}$$
Further for $\alpha \in (0,1/2)$ the 
probability of reaching consensus on 1 (the minority value initially) satisfies 
\begin{eqnarray*}
h_N(\alpha) &\le& c \exp \Bigl( -(N-1)(m-1) D\Bigl( \alpha; \frac{1}{2} \Bigr) \Bigr),
\end{eqnarray*}
where $c>0$ is a constant that does not depend on $N$, 
$D(p;1/2)=\log 2-H(p)$ is the relative entropy or Kullback-Leibler divergence
of the Bernoulli($p$) distribution with respect to the Bernoulli(1/2) distribution,
and $H(p)$ is the Shannon entropy of the Bernoulli($p$) distribution. Moreover,
the two sides of the above inequality are logarithmically equivalent as $N$ tends to
infinity. 
\end{theorem}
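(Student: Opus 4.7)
The plan is to exploit the fact that for the $(m,m)$ rule the process $X(t)$ is a birth-death chain and use the classical hitting-probability formula, then extract asymptotics via Stirling/entropy estimates.

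\textbf{Step 1: Identify the transition rates.} Under the $(m,m)$ rule, a 0-node flips only if all $m$ samples are 1, so
\begin{equation*}
q_{n,n+1} = (N-n)\Bigl(\frac{n}{N}\Bigr)^m, \qquad q_{n,n-1} = n\Bigl(\frac{N-n}{N}\Bigr)^m.
\end{equation*}
This is a birth-death chain on $\{0,\ldots,N\}$ with absorbing endpoints, so the standard formula gives
\begin{equation*}
\mathbb{P}(X(\infty)=N\mid X(0)=i) = \frac{\sum_{k=0}^{i-1}\rho_k}{\sum_{k=0}^{N-1}\rho_k},\quad \rho_k=\prod_{j=1}^{k}\frac{q_{j,j-1}}{q_{j,j+1}}.
\end{equation*}

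\textbf{Step 2: Simplify the product.} A direct cancellation gives $q_{j,j-1}/q_{j,j+1} = (N-j)^{m-1}/j^{m-1}$, so
\begin{equation*}
\rho_k = \prod_{j=1}^{k}\left(\frac{N-j}{j}\right)^{m-1} = \binom{N-1}{k}^{m-1},
\end{equation*}
which yields the claimed closed-form expression.

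\textbf{Step 3: Upper bound via entropy.} For the asymptotics, the plan is to sandwich numerator and denominator. For $\alpha < 1/2$ and $i = \lfloor \alpha N \rfloor$, the sequence $\binom{N-1}{k}^{m-1}$ is strictly increasing over $k \in \{0,\ldots,i-1\}$ (since $i-1 < (N-1)/2$), so I would bound
\begin{equation*}
\sum_{k=0}^{i-1}\binom{N-1}{k}^{m-1} \le i \binom{N-1}{i-1}^{m-1},
\end{equation*}
and bound the denominator below by its central term $\binom{N-1}{\lfloor (N-1)/2\rfloor}^{m-1}$. Applying Stirling's formula in the standard sharp form $\binom{N-1}{k} = \exp((N-1)H(k/(N-1)))\cdot\Theta(1/\sqrt{N})$ converts these into exponentials of $(N-1)(m-1)H(\alpha)$ and $(N-1)(m-1)\log 2$ respectively, with only polynomial prefactors. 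Their ratio is bounded above by $c\,\exp(-(N-1)(m-1)D(\alpha;1/2))$ with $c$ absorbing the polynomial corrections, which is the claim for $h_N$ at integer multiples of $1/N$; the linear interpolation in \eqref{hndef_gen} only worsens the constant.

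\textbf{Step 4: Logarithmic equivalence.} For the matching lower bound (which gives log-equivalence), I would observe that the numerator is at least one term, $\binom{N-1}{i-1}^{m-1}$, while the denominator is at most $N$ times its maximum term $\binom{N-1}{\lfloor (N-1)/2\rfloor}^{m-1}$. The ratio is therefore at least a polynomial factor times $\exp(-(N-1)(m-1)D(\alpha;1/2))$, so after taking $\log/N$ both sides converge to $-(m-1)D(\alpha;1/2)$.

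The only delicate point is bookkeeping the polynomial corrections in Stirling uniformly in $\alpha$ bounded away from $0$ and $1/2$, which is routine. The step I expect to require the most care is tracking the $\Theta(1/\sqrt{N})$ prefactors so that the constant $c$ in the upper bound is genuinely independent of $N$ (as opposed to growing polynomially); this can be handled by using a two-sided Stirling estimate and absorbing the resulting $\sqrt{N}$ factors into a slight widening of the exponential rate, or more cleanly by noting that the ratio of prefactors is bounded.
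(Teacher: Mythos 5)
Your Steps 1--2 reproduce the paper's derivation of the exact formula (the paper phrases it via the Doyle--Snell resistor analogy, but the standard birth--death absorption formula with $\rho_k=\binom{N-1}{k}^{m-1}$ is the same computation), and your Step 4 lower bound gives the logarithmic equivalence correctly. The gap is in Step 3. Bounding the numerator by $i\,\binom{N-1}{i-1}^{m-1}$ with $i=\lfloor\alpha N\rfloor$ introduces a factor of order $N$, and $N e^{-(N-1)(m-1)D(\alpha;1/2)}$ cannot be rewritten as $c\,e^{-(N-1)(m-1)D(\alpha;1/2)}$ with $c$ independent of $N$; so as written you only prove the theorem with $c$ replaced by $cN$, which is weaker than the stated claim. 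Your two proposed remedies do not close this: ``widening the exponential rate'' changes the exponent, which the theorem fixes exactly, and the cancellation of Stirling prefactors between numerator and denominator is real but irrelevant to the extra factor $i$, which comes from counting terms in the sum, not from Stirling.

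The missing idea (which is how the paper proceeds) is that for $k\le \alpha N$ with $\alpha<1/2$ the consecutive ratios satisfy $\binom{N-1}{k-1}/\binom{N-1}{k}\le \alpha/(1-\alpha)<1$, so the summands $\binom{N-1}{k}^{m-1}$, read downward from $k=i-1$, are dominated by a geometric series with ratio $(\alpha/(1-\alpha))^{m-1}<1$. Hence
$$\sum_{k=0}^{i-1}\binom{N-1}{k}^{m-1}\le \frac{1}{1-(\alpha/(1-\alpha))^{m-1}}\,\binom{N-1}{i-1}^{m-1},$$
with a constant depending on $\alpha$ and $m$ but not on $N$. With this replacement for your crude $i\times(\text{max term})$ bound, the rest of your argument (central-term lower bound on the denominator plus two-sided Stirling, whose $\sqrt{N}$ prefactors indeed cancel in the ratio) goes through and yields the theorem as stated.
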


In words, the theorem says that the probability of reaching the incorrect consensus decays exponentially in the population size $N$, and the error exponent is a multiple of a Kullback-Leibler divergence involving the initial condition. If we compare the result in the theorem for $m=2$ to what is known for the classical voter model, we see that even just polling two nodes rather than a single node provides a substantial improvement: the probability of obtaining the wrong result decays exponentially in the number of nodes, which is not seen in the original voter model.

We have moved the proofs of all the theorems to the appendix but give a brief outline of the argument here. In order to compute hitting probabilities for the Markov chain $X(t)$, we use the well-known resistor analogue described by Doyle and  Snell \cite{resistor}, for example. Consider $N$ resistors $R_1,R_2,\ldots,R_N$ placed in series and connected to a 1 volt battery. Let $V_i$ denote the voltage at the junction between resistors $R_i$ and $R_{i+1}$ with $V_0=0$ and $V_N=1$. The voltages $V_i$ satisfy the relation
\begin{equation} \label{eq:voltage}
V_i = \frac{R_i V_{i+1} + R_{i+1} V_{i-1}}{R_i+R_{i+1}}
\end{equation}
for all $i$ between $1$ and $N-1$. On the other hand, by conditioning on the first step of the Markov chain $X(t)$ from state $i$, we can see that the hitting probabilities $h_N(i/N)$ satisfy the recursion
\begin{equation} \label{eq:hitting}
h_N \Bigl( \frac{i}{N} \Bigr) = p_{i,i+1} h_N \Bigl( \frac{i+1}{N} \Bigr) + p_{i,i-1} h_N \Bigl( \frac{i-1}{N} \Bigr),
\end{equation}
where $p_{i,i+1}$ and $p_{i,i-1}$ are the transition probabilities for the embedded discrete-time Markov chain associated with $X(t)$, obtained by embedding at the jump times. These probabilities are proportional to the corresponding transition rates. Moreover, $h_N(0)=0$ and $h_N(1)=1$. Thus, comparing equations (\ref{eq:voltage}) and (\ref{eq:hitting}), we see that they have the same solutions if we set the resistor ratios $R_{i+1}/R_i$ to be the same as the transition rates ratios $q_{i,i-1}/q_{i,i+1}$. When we do this, we find that the resistor values are proportional to the $(m-1)^{\rm th}$ power of the binomial coefficients ${N\choose i}$. This allows us to apply the Chernoff bound for the binomial distribution to obtain the bound on the hitting probabilities claimed in the theorem. Details can be found in the appendix.

We now proceed to consider the more general setting. 


\begin{theorem}\label{hitprob}
Consider the $(m,d)$ algorithm with $m\geq 2$ on the complete graph on $N$ nodes.
If $d>m/2$ we have for all $\alpha \in (0,1/2)$ that
$$\limsup_{N \rightarrow \infty} \frac{1}{N} \log(h_N(\alpha)) \leq - \int_{\alpha}^{\frac{1}{2}} \log g(x) dx, $$
with $g: [0,1] \rightarrow 
\mathbb{R}^+\cup +\infty$  and $g(x)>1$ for all $x \in[0,1/2)$. Spefically 
$$ g(x) = \frac{x \mathbb{P}( Z_x\leq(m-d))}{(1-x)\mathbb{P}(Z_x\geq d)}, $$ 
where, for $x\in [0,1]$, 
$Z_x$ denotes a random variable with the Bin$(m,x)$ distribution. 

\end{theorem}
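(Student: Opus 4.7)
The plan is to combine the standard birth--death hitting-probability formula (as used in Theorem \ref{mmprob}) with a Riemann-sum asymptotic. Since $X(t)$ is a birth--death chain on $\{0,1,\ldots,N\}$ with absorbing endpoints, the resistor analogy sketched after Theorem \ref{mmprob} gives
\begin{equation*}
h_N\!\left(\frac{i}{N}\right) \;=\; \frac{\sum_{k=0}^{i-1}\rho_k}{\sum_{k=0}^{N-1}\rho_k},\qquad \rho_k := \prod_{j=1}^{k}\frac{q_{j,j-1}}{q_{j,j+1}},\quad \rho_0:=1.
\end{equation*}
Using the identity $\prob(\mbox{Bin}(m,1-x)\ge d)=\prob(\mbox{Bin}(m,x)\le m-d)$, a short calculation shows $q_{j,j-1}/q_{j,j+1}=g(j/N)$, so $\log\rho_k=\sum_{j=1}^k \log g(j/N)$.

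The crux of the argument is the unimodality of $k\mapsto\rho_k$, which is equivalent to the claim $g(x)>1$ on $[0,\tfrac{1}{2})$. The symmetry $g(1-x)=1/g(x)$ immediately yields $g(\tfrac{1}{2})=1$, while the hypothesis $d>m/2$ gives $g(x)\sim C\,x^{1-d}\to\infty$ as $x\to 0^+$. To rule out interior zeros I would exploit that $d>m/2$ forces $m-d<d$, so that the tails $\prob(Z_x\le m-d)$ and $\prob(Z_x\ge d)$ are supported on disjoint index ranges; the inequality $x\,\prob(Z_x\le m-d)>(1-x)\,\prob(Z_x\ge d)$ then reduces, after expanding and reindexing, to a termwise binomial comparison that is valid for $x<\tfrac{1}{2}$.

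Given unimodality, $\rho_k$ is increasing on $\{0,\ldots,\lfloor N/2\rfloor\}$, so the numerator of $h_N(\alpha)$ is bounded above by $\lfloor\alpha N\rfloor\cdot\rho_{\lfloor\alpha N\rfloor}$ and the denominator from below by the single term $\rho_{\lfloor N/2\rfloor}$. Taking logarithms gives
\begin{equation*}
\frac{1}{N}\log h_N(\alpha)\;\le\; \frac{\log(\alpha N)}{N}-\frac{1}{N}\sum_{j=\lfloor\alpha N\rfloor+1}^{\lfloor N/2\rfloor}\log g(j/N).
\end{equation*}
On $[\alpha,\tfrac{1}{2}]$ the function $\log g$ is continuous and bounded (we stay away from the singularity at $x=0$), so the right-hand sum converges to the Riemann integral $\int_\alpha^{1/2}\log g(x)\,dx$, while the first term vanishes, giving the stated $\limsup$ bound.

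The main obstacle is the positivity of $\log g$ on $[0,\tfrac{1}{2})$: it is what makes the two-term bound sharp to leading exponential order, and failure at even a single interior point would destroy both the numerator maximisation and the denominator lower bound. Everything else---the birth--death formula, the tail identity, the handling of floors, and the Riemann-sum convergence---is routine and does not interact with the large-deviations rate.
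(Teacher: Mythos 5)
Your proposal is correct and follows essentially the same route as the paper: the birth--death/resistor formula $h_N(i/N)=\sum_{k<i}\rho_k/\sum_{k<N}\rho_k$ with $\rho_k=\prod_j g(j/N)$, the termwise binomial comparison (ratio $(x/(1-x))^{m-2k+1}\ge 1$ for $k\ge d>m/2$, $x<1/2$) to get $g>1$ on $[0,1/2)$, the numerator/denominator bounds by $O(N)$ times the largest term and a single term at $\lfloor N/2\rfloor$, and the Riemann-sum limit using continuity of $\log g$ on $[\alpha,1/2]$. No substantive differences.
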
 

The error exponent of the algorithm is governed by the function $g$, which encodes the drift of the Markov chain; 
it gives the ratio of the probability of an up step to the probability of a down step as a function of the fraction of 
nodes in state $1$. It is readily verified that if $d=m$, then this theorem recovers a weaker form of the result in 
Theorem \ref{mmprob}.

Note that $g(x)>1$ for all $x\in[0,1/2)$ if $d> m/2$, i.e., if we require a strict majority to force a state change 
at a node. If we were to sample nodes without replacement, then the random variable $Z_x$ would have a 
hypergeometric distribution instead of a binomial distribution. In other words, we would have
$$
\mathbb{P}(Z_x=k) = \frac{ {\lfloor xN \rfloor \choose k}{\lceil (1-x)N \rceil \choose m-k}}{{N \choose m}}.
$$
As $N$ tends to infinity, and for any fixed $x\in (0,1)$ and $k\in \{ 0,1,\ldots,m \}$, the expression above tends 
to the probability that a Bin$(m,x)$ random variable takes the value $k$. This confirms what we noted intuitively 
earlier, that the difference between sampling with and without replacement is negligible.


\subsection{Time to reach consensus} \label{analysis_det_time}

In addition to the probability of error, we are also interested in the time to reach consensus. For an algorithm to be useful in practice, we need this time to be short. Recall that in the classical voter model, the time to consensus is $\Theta(N)$, i.e, it grows linearly in the number of nodes. Before stating results on the time to convergence for our class of algorithms, we define functions $\tau^\alpha_N :[\alpha,1-\alpha]\rightarrow \mathbb{R}^+$ by
$$\tau_N^\alpha(x) =\mathbb{E}_{\lfloor x N\rfloor }(\inf \{ t>0: X(t) \leq  \alpha N \textrm{ or } X(t)\geq (1-\alpha) N \}),$$
where the notation $\mathbb{E}_y$ denotes the expectation starting from the initial condition $y$. In words, $\tau_N^\alpha(x)$ is the expected time to come within a fraction $\alpha$ of consensus starting from an initial proportion $x$ of nodes in state 1, and $\tau_N^0(x)$ is the corresponding expected time to reach complete consensus. Our next theorem states that the time to reach consensus starting from any initial condition is $O(\log N)$, whereas the time to come within any non-zero fraction $\alpha$ of consensus is $O(1)$. To put it another way, the time to get arbitrarily close to consensus (as a proportion of all nodes) does not grow with system size, and the time to reach full consensus only grows logarithmically. The logarithmic term is unavoidable and comes from a lower bound on the time for the last node to contact any other node.

\begin{theorem}\label{hittime}
Consider the $(m,d)$ algorithm with $2d>m\geq d$ on the complete graph on $N$ nodes. For all $x\in[0,1/2)$ we have
$\tau^0_N(x)= \Theta (\log(N))$ and for $\alpha>0$ we have $\tau^\alpha_N(x)= O(1)$. 
\end{theorem}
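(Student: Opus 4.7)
Write $p_x = \mathbb{P}(\mathrm{Bin}(m,x) \geq d)$, so the transition rates are $q_{n,n+1} = (N-n) p_{n/N}$ and $q_{n,n-1} = n\, p_{(N-n)/N}$, and let $F(y) = (1-y) p_y - y\, p_{1-y}$ be the scaled drift. The hypothesis $2d > m$ yields, by a direct computation of the same flavour as the one behind Theorem \ref{hitprob}, that $F < 0$ on $(0,1/2)$, $F > 0$ on $(1/2,1)$, and $F$ vanishes in $(0,1)$ only at $1/2$. In particular, for any fixed $x \in (0,1/2)$ and $\alpha \in (0,x]$, the fluid ODE $\dot y = F(y)$ with $y(0) = x$ reaches level $\alpha$ in finite time $T(x,\alpha) = \int_\alpha^x dy/|F(y)|$, the integrand being bounded on the compact interval $[\alpha,x]$.

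\textbf{Bound $\tau^\alpha_N(x) = O(1)$.} I would invoke Kurtz's law of large numbers for density-dependent Markov chains: for any $T > 0$ and $\eta > 0$,
$$\mathbb{P}\Bigl(\sup_{t \leq T} \Bigl|\frac{X(t)}{N} - y(t)\Bigr| > \eta\Bigr) \leq C e^{-c_\eta N}.$$
Taking $T = T(x,\alpha/2)$ and $\eta = \alpha/2$, the chain exits $(\alpha N, (1-\alpha)N)$ by time $T$ except on an exceptional event of probability $Ce^{-cN}$. Let $M_N$ be the supremum of $\tau^\alpha_N(x')$ over $x' \in [\alpha, 1-\alpha]$; then $M_N$ is finite (the chain is finite-state and the absorbing states are reachable), and the strong Markov property gives $M_N \leq T + Ce^{-cN} M_N$, hence $M_N = O(1)$ and in particular $\tau^\alpha_N(x) = O(1)$.

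\textbf{Bound $\tau^0_N(x) = \Theta(\log N)$.} Fix a small $\beta > 0$. By the previous step, the chain enters $\{0,\ldots,\beta N\} \cup \{(1-\beta)N,\ldots,N\}$ in expected time $O(1)$. For any $k \leq \beta N$,
$$q_{k,k-1} \geq k(1 - o_N(1)), \qquad q_{k,k+1} \leq \binom{m}{d}\, k^d/N^{d-1},$$
and since $d \geq 2$ (from $2d > m \geq 2$), the latter is much smaller than the former for $\beta$ and $k/N$ small. Stochastic domination by a pure-death chain with rate $k/2$ at state $k$ gives an expected hitting time of $0$ of at most $\sum_{k=1}^{\beta N} 2/k = O(\log N)$; the mirror argument handles the upper regime, and some care is needed to argue that excursions between the two regimes are controlled (this requires only that such excursions are exponentially unlikely against the strong drift). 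For the matching lower bound, the birth-death chain must visit every state between its starting point and its absorbing state, and the bounds above give $q_{k,k-1} + q_{k,k+1} \leq 2k$ for all $k \leq N^{(d-1)/d}/C \geq \sqrt{N}/C$, so
$$\tau^0_N(x) \geq \sum_{k=1}^{\lfloor \sqrt{N}/C \rfloor} \frac{1}{q_{k,k-1} + q_{k,k+1}} \geq \sum_{k=1}^{\lfloor \sqrt{N}/C \rfloor} \frac{1}{2k} = \Omega(\log N).$$

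\textbf{Main obstacle.} The delicate step is converting the finite-horizon law of large numbers into a bound on expected first-passage time that is uniform in the starting state, particularly when the starting fraction $x$ is close to the unstable fixed point $y = 1/2$ of the fluid dynamics. The restart argument as sketched needs $M_N < \infty$ a priori and constants in Kurtz's theorem that do not degenerate as $y \to 1/2$. If this turns out to be fragile, a clean alternative that sidesteps Kurtz altogether is a quadratic Lyapunov function $V(n) = (n/N - 1/2)^2$, whose drift is $2 F(n/N)(n/N - 1/2) + O(1/N)$; bounding this below by a multiple of $\min(V(n), c)$ and applying Dynkin's formula also delivers the $O(1)$ bound.
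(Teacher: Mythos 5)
Your overall architecture (drift function $F$ vanishing only at the unstable point $1/2$, an $O(1)$ fluid phase, boundary holding times of order $1/k$ summing to $\log N$, and a lower bound from the slow exit rates near the absorbing states) is the right shape, and your lower bound is correct — and genuinely different from the paper's, which instead notes that every minority node must wake at least once, i.e.\ a maximum of $\Theta(N)$ independent $\mathrm{Exp}(1)$ clocks. But both halves of your upper bound have real gaps as written.

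First, the restart inequality $M_N \le T + Ce^{-cN}M_N$ is not justified. You define $M_N$ as a supremum over all starting fractions $x'\in[\alpha,1-\alpha]$, but the horizon $T=T(x,\alpha/2)$ in the Kurtz estimate is tied to the particular starting point $x$: after a failed attempt the chain may sit anywhere in $(\alpha N,(1-\alpha)N)$, including within $o(1)$ of the unstable fixed point, where the fluid exit time $\int dy/|F(y)|$ diverges because $F(1/2)=0$. So no single finite $T$ makes the displayed inequality valid at the restart, which is exactly the circularity you flag; the Lyapunov function $V(n)=(n/N-1/2)^2$ with $LV \ge 2(y-\tfrac12)F(y)+O(1/N) \gtrsim \min(V,c)$ does repair this, but that computation is the actual content of the step and is only asserted. (Also, your claim that $F$ vanishes in $(0,1)$ only at $1/2$ requires $m\ge 2$: for $m=d=1$, which satisfies $2d>m\ge d$, one has $F\equiv 0$ and the conclusion is false, so the hypothesis $m\ge 2$ from Theorem \ref{hitprob} must be carried over.)

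Second, ``stochastic domination by a pure-death chain with rate $k/2$'' is not a valid step: deleting the small but positive birth rates does not yield a chain whose absorption time dominates that of the original, since every up-jump costs additional down-jumps and additional holding times that must be accounted for. The correct accounting — and the heart of the paper's proof — is that the expected number of visits to each state $k$ before absorption is bounded by a constant independent of $N$ and $k$ (the return probability to $k$ is bounded away from $1$ by a gambler's-ruin computation, using that $q_{k,k+1}/q_{k,k-1}$ is uniformly below $1$ in the boundary region), and each visit costs expected time $1/(q_{k,k-1}+q_{k,k+1})=O(1/k)$; summing over $k\le\beta N$ gives $O(\log N)$. Your remark that excursions are ``exponentially unlikely'' points at the wrong estimate: what is needed is a geometric, constant-mean bound on returns per state, not an exponentially small probability. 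The paper packages both of your phases into one explicit coupling — $\min\{X,N-X\}$ is dominated by a birth-death chain that is a symmetric walk with holding rate $c_2N$ on the middle $\epsilon N$ states (giving the $O(1)$ contribution) and a $\beta$-biased walk with holding rate $c_1 j$ outside (giving $\sum_j 1/(jc_1)=O(\log N)$) — which sidesteps Kurtz, the restart, and the Lyapunov function entirely and is the cleaner route here.
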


The proof is in the appendix. The idea is to bound the number of visits to each state before reaching an absorbing state (or the boundary of the specified region for approximate consensus). These bounds can be obtained by constructing a simplified process whose consensus time stochastically dominates that of the original process. The key idea, familiar from the gambler's ruin problem, is that for a random walk with drift, the expected number of visits to each state is a constant that does not grow with $N$. (In the classical gambler's ruin problem, the drift is constant, whereas in our setting the drift is state-dependent, but this complication is manageable.) The number of states in the state space is proportional to $N$, but so are the transition rates of the Markov chain (since all $N$ nodes are independently polling other nodes at unit rate) except very close to the boundary. It is these boundary effects that make the time to reach consensus logarithmic rather than bounded. See the appendix for details.

\subsubsection{Example}

To better understand the results given previously we consider the specific example of the $(2,2)$ algorithm. Here a node upon waking contacts 2 nodes and only changes state if both nodes disagree with its current state. As before we let $X$ be the number of nodes in state 1 with a total of $N$ nodes. Previously we have seen that $X$ evolves as a continuous time Markov chain. Let $p^N(x)$ be the probability of obtaining a consensus of all nodes being in state 1 and $t^N(x)$ be the expected time to any consensus with $x$ being the initial number of nodes in state 1. For a given $N$ 
we can calculate $p^N(x)$ and $t^N(x)$ exactly by solving the following recurrence relations:
\begin{multline*}(x+(N-x))p^N(x) = xp^N(x+1)  + (N-x)p^N(x-1) \\ \textrm{ with } p^N(0)=0, \; p^N(N)=1,\end{multline*}
and
\begin{multline*}
(x+(N-x))t^N(x)= xt^N(x+1)  + (N-x)t^N(x-1) + \frac{N^2}{x(N-x)}\\ \textrm{ with } t^N(0)=0, \; t^N(N)=0. \end{multline*}
In figure \ref{fig::exact} we see a plot of $p^N$ and $t^N$ for $N=1000$ as we vary the initial number of nodes with state 1.
 
\begin{figure}[!t]
\centering
\epsfig{file=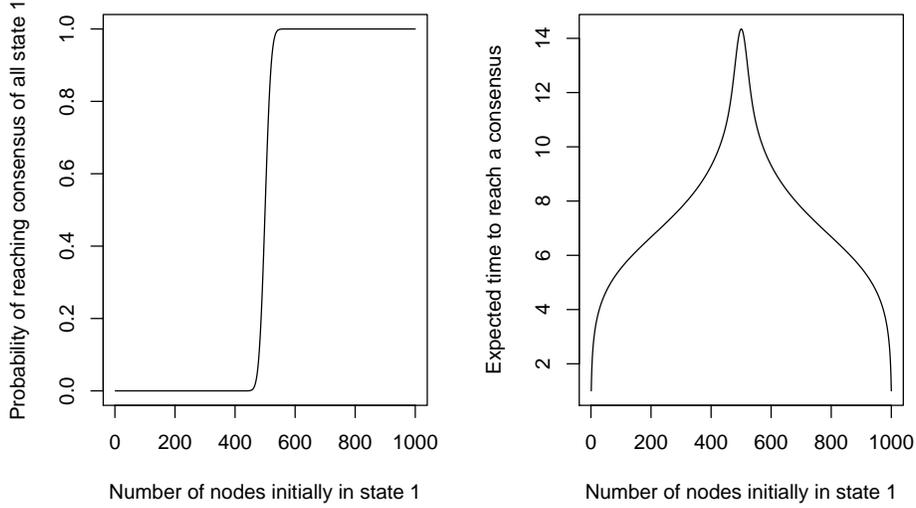, width=\textwidth}
\caption{Exact calculations of the probability to reach consensus of all state 1 and expected time to consensus for 1000 nodes. }\label{fig::exact}
\end{figure}

In this paper we have focused on the behaviour of $p^N$ and $t^N$ as we scale the number of nodes, $N$, to infinity. For a fixed $0<\alpha <1/2$ we set $x= \lfloor \alpha N\rfloor$, Theorem 1 tells us that $p^N(x)$ decays exponentially in $N$ and from Theorem 3 we have $t^N(x)$ grows logarithmically in $N$. In figure \ref{fig::scaling}  we plot $\log(p^N(\lfloor  N/3 \rfloor))$ and $t^N(\lfloor N/3 \rfloor)$ against $N$, the number of nodes. For $p^N$ we have also include a dotted line describing the asymptotic behaviour from Theorem 1. From these plots we can see agreement with the theory even for a small number of nodes. 

\begin{figure}[!t]
\centering
\epsfig{file=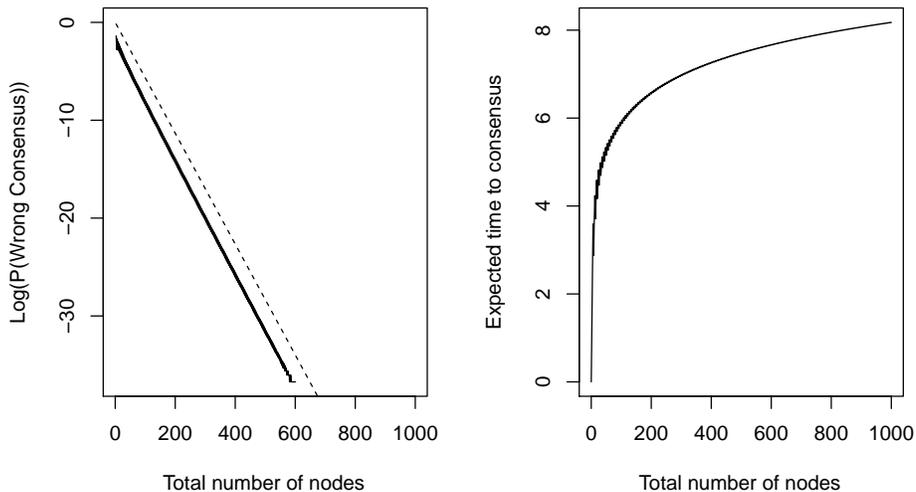, width=\textwidth}
\caption{Scaling behaviour of the probability of wrong consensus and expected time to reach consensus with a 1/3 of nodes initially with state 1. Dashed line in the left hand plot shows asymptotic behaviour from Theorem 1 with $c=1$. }\label{fig::scaling}
\end{figure}

\section{Analysis of the stochastic case} \label{analysis_stoch}

So far we have focused on the class of $(m,d)$ algorithms where the $m$ and $d$ are determined at the start, and are fixed and equal for all nodes. A simple generalization is to allow node heterogeneity, i.e., to allow $m$ and $d$ to be different at each node. In the social learning context, this would model differences among individuals in how they form and modify opinions, and in the decentralized consensus context it could model differences in the connectivity of nodes. The most general version of such a system, in which every node has its own values of the parameters $m$ and $d$, appears analytically intractable. One way to get a tractable approximation would be to assume that nodes belong to a small number of classes, and that the parameters $m$ and $d$ only depend on the class. This still gives rise to a multi-dimensional Markov chain. We consider an even simpler approximation where we allow $m$ and $d$ to be randomly selected each time a node carries out an update. More precisely, each time a node carries out an update, it selects a pair $(m,d)$ according to the distribution of a random vector $(M,D)$; selections are independent and identically distributed (iid) across nodes and over time. In this case, the system can be described by a one-dimensional Markov chain as before, since the number of nodes with value $1$ is an adequate state descriptor. We will assume that the random vector $(M,D)$ satisfies the inequalities $2D>M\ge D$ almost surely, i.e., that a state change always requires at least a simple majority of the polled nodes to have a different state. The assumption is intuitive and does not appear overly restrictive from a practical point of view.

The analysis which we carried out for the deterministic $(m,d)$ algorithms  (Theorems \ref{hitprob} and \ref{hittime}) carries through with only minor changes. Specifically, we need to re-examine the definition of the function $g$. In the deterministic case,  $g$ was defined in terms of the probabilities of the number of nodes in state $1$ increasing or decreasing by $1$ at the next transition. In the stochastic setting, this is still the case, but we now have conditional probabilities depending on the realization of the random vector $(M,D)$. Hence, we need to take an expectation with respect to the distribution of $(M,D)$ to obtain the corresponding unconditional probabilities.

We start by defining the following functions:
$$p_1(m,d,x)=\mathbb{P}( Z_{m,x}\leq(m-d))$$
and
$$p_2(m,d,x) =\mathbb{P}( Z_{m,x}\geq d)$$
where $Z_{m,x}$ denotes a random variable with the Bin$(m,x)$ distribution as before, but we now make the dependence on $m$ explicit in the notation. Analogous to $g$, we now define
$$\tilde g(x)=\frac{x \mathbb{E}_{(M,D)}( p_1(M,D,x))}{(1-x) \mathbb{E}_{(M,D)}( p_2(M,D,x))}.$$
With this definition in place, we can now restate our results as follows:
\begin{theorem} \label{random}
Consider the $(M,D)$ algorithm where, at each update step, each node uses an independent sample of the random variable $(M,D)$ to decide on the number of nodes to poll and to set the decision threshold for changing its state. We assume that $2D>M\geq D$ almost surely. For this algorithm, the probability of reaching consensus on the incorrect value satisfies
$$\limsup_{N \rightarrow \infty} \frac{1}{N} \log(h_N(\alpha)) \leq - \int_{\alpha}^{\frac{1}{2}} \log \tilde g(x) dx. $$
 The time to reach full consensus is $O(\log n)$ for all initial conditions, as in the deterministic case.
\end{theorem}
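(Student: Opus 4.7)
The plan is to reduce Theorem \ref{random} to its deterministic counterparts (Theorems \ref{hitprob} and \ref{hittime}) by observing that, in the randomized scheme, the one-dimensional Markov chain $X(t)$ has transition rates obtained by averaging the deterministic rates over the distribution of $(M,D)$. Explicitly,
$$q_{n,n+1} = (N-n)\, \mathbb{E}\Bigl[ \mathbb{P}\Bigl( \mbox{Bin}\Bigl(M,\tfrac{n}{N}\Bigr) \geq D \Bigr)\Bigr], \qquad q_{n,n-1} = n\, \mathbb{E}\Bigl[ \mathbb{P}\Bigl( \mbox{Bin}\Bigl(M,\tfrac{N-n}{N}\Bigr) \geq D \Bigr)\Bigr],$$
where the expectation is over $(M,D)$. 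Using the symmetry $\mathbb{P}(\mbox{Bin}(M,1-x)\ge D) = \mathbb{P}(\mbox{Bin}(M,x) \le M-D)$, the ratio $q_{n,n-1}/q_{n,n+1}$ at $x=n/N$ is precisely $\tilde g(x)$. Thus the embedded jump chain is of the same form as in the deterministic case, with $\tilde g$ replacing $g$.

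For the hitting-probability bound, I would apply the resistor analogue exactly as in the proof of Theorem \ref{hitprob}. Setting $r_0=1$ and $r_k/r_{k-1} = \tilde g(k/N)$, the hitting probability is $h_N(i/N) = \sum_{k=0}^{i-1} r_k / \sum_{k=0}^{N-1} r_k$. Taking logarithms and approximating $\log r_k = \sum_{j=1}^k \log \tilde g(j/N)$ by a Riemann sum converts everything into integrals of $\log \tilde g$; identifying the Laplace-type dominant term of the denominator (which is concentrated near $k\approx N/2$) yields the stated exponent $-\int_\alpha^{1/2} \log \tilde g(x)\, dx$.

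For the time bound, the crucial point is that $\tilde g(x) > 1$ on $[0,1/2)$. This follows immediately from the corresponding deterministic fact: for every $(m,d)$ with $2d>m\ge d$ we have $x\,\mathbb{P}(Z_{m,x} \le m-d) > (1-x)\,\mathbb{P}(Z_{m,x}\ge d)$ on $[0,1/2)$, and this pointwise strict inequality is preserved under the expectation in $(M,D)$. With $\tilde g(x)>1$ in hand, the stochastic-domination argument from Theorem \ref{hittime} applies: a gambler's-ruin estimate with state-dependent drift gives an $O(1)$ bound on the expected number of visits to each interior state, while the $\log N$ factor arises from the degeneration of the transition rates near $n=0$ and $n=N$.

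The main obstacle I expect is verifying uniformity of the Riemann-sum approximations when $\tilde g$ is the averaged object rather than a fixed $g_{m,d}$, and in particular checking integrability of $\log \tilde g$ near $1/2$, where $\tilde g(1/2)=1$. Both should follow from continuity of $\tilde g$ on $[0,1/2)$ and a mild moment condition on $M$ (implicit from $M\ge D\ge 1$ and any practical bound on $M$), but some care is needed at the endpoints and in handling the random support of $(M,D)$.
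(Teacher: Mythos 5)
Your proposal is correct and follows essentially the same route as the paper: the paper's own proof of Theorem \ref{random} merely records the averaged transition rates (up-rate $(N-X)\,\E_{(M,D)}[p_2(M,D,X/N)]$, down-rate $X\,\E_{(M,D)}[p_1(M,D,X/N)]$) and then asserts that the resistor-network and stochastic-domination arguments of Theorems \ref{hitprob} and \ref{hittime} carry over with $\tilde g$ in place of $g$, which is exactly the reduction you spell out (in more detail than the paper itself gives). One small caveat: the pointwise inequality $x\,p_1(m,d,x)\ge(1-x)\,p_2(m,d,x)$ is an \emph{equality} for $(m,d)=(1,1)$, so strictness of $\tilde g>1$ on $[0,1/2)$ actually requires $(M,D)\neq(1,1)$ with positive probability --- but the theorem statement shares this blind spot, and only $\tilde g\ge 1$ is needed for the hitting-probability bound.
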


As an example of the application of the results we consider the randomized algorithm which selects the (1,1) algorithm with probability $p$ and the (2,2) algorithm with probability $(1-p)$. So in an update event the node either asks one or two of its neighbors there current state and only changes its state if all people it asks disagree with its current state. So, in this setting we have
$${\tilde g(x)} = \frac{(1- (1-p)x)}{(p+(1-p)x)}.$$

From this we see that for $x\in[0,1/2)$ ${\tilde g(x)}>1$ which means that we have exponential decay of the
error probability in the number of nodes. Moreover, the time to reach consensus is still $O(log(N))$.
In other words, the benefits of polling two nodes instead of one are realised even if this modification
is only implemented  an arbitrary small fixed  fraction of the time not depending on the number of nodes.
We can use the result in Theorem \ref{hitprob} to find the  limit for the error probability from an initial proportion $x$, $h_N(x)$, given by
$$\lim_{N \rightarrow \infty} \frac{1}{N} \log(h_N(x)) \leq - \left( I(1/2) - I(x) \right),$$
where
\begin{align*}
I(x)=&(x-\frac{1}{(1-p)})\log(1-(1-p)x)\\&-(x+\frac{p}{(1-p)})\log(p+(1-p)x).
\end{align*}
In figure \ref{fig:ranprob} we plot  the limit bound on the exponent for various values of $p$.


\begin{figure}[!t]
\centering
\epsfig{file=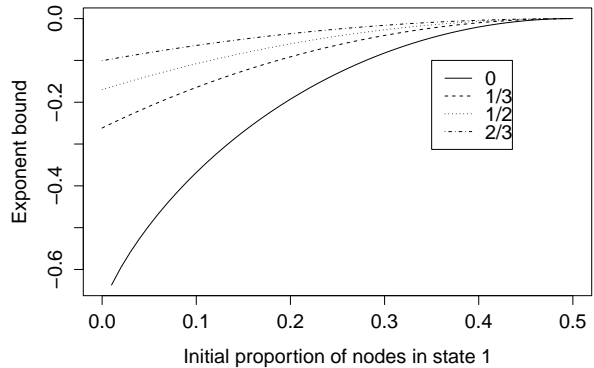, width=0.65\textwidth}
\caption{The upper bound on the exponent of error probability in the limit}\label{fig:ranprob}
\end{figure}

\section{Discussion and directions for future research} \label{sec:discussion}

We considered a number of variants of the classical voter model based on quite general schemes for polling agents and modifying opinions based on majority voting rules. These variants are of potential interest as models of opinion formation and propagation in social networks. However, our results only apply to complete networks. Extending the results to a wide class of network models is a key challenge for future work, and would be of particular relevance to the propagation of opinions in social networks.  Our methods don't immediately extend to other graphs as keeping track of the number of nodes in each state is insufficient, and a Markovian description would require keeping track of the states of all nodes. Thus, the size of the state space is $2^n$ for a general $n$-node graph. Hence, a direct approach seems intractable, and one would seek suitable bounds or approximations. The voter model on general graphs is related to coalescing random walks, and has a martingale associated with this description, which permit easy analysis. There are no such obvious connections for our generalised voter model, which makes this problem challenging on general graphs. Therefore, we believe novel ideas are required.

Our motivation for this work came in part from \cite{milan}, where it was shown that having internal states changed the error probability from a constant to something decaying exponentially in the system size. Specifically, they considered a model with a single internal state, in which a node with value 0 would need to successively contact 2 nodes with value 1 in order to change its value to a 1. By analogy, this motivated us to consider a model in which each node polls two other nodes at each step and only changes value if both of them differ. For this model, it turns out that we get exactly the same error exponent as in the model of \cite{milan}. However, it is not clear whether or how this generalizes to models with multiple internal states. Are there analogues between such models and our class of generalized voter models? Intuitively, we believe that it should be possible to find relations between the two classes of models, and to rigorously establish inequalities between their error probabilities. This is another potential area for future work.

\appendix
\section{Proof of theorems}
 Throughout the proofs we adopt the convention that any summation with a lower index larger than it's upper index is taken to be zero.
\begin{proof}[Proof of Theorem \ref{mmprob}]
Let $X(t)$ denote the number of nodes in state 1 at time $t$. As we are considering the $(m,m)$ algorithm on a complete graph with $N$ nodes, the dynamics of $X$ are governed by a continuous time Markov chain on the state space $\{ 0,1,\ldots, N \}$, with absorbing states at $0$ and $N$, and with transition rates given by
$$ X \textrm{  goes to  } \begin{cases} X+1 \textrm{ with rate }(N-X) \left(\frac{X}{N}\right)^m ,\\
X-1 \textrm{  with rate  } X \left( \frac{N-X}{N}\right)^m .\end{cases} $$
We are interested in the probability of $X$ hitting state $N$ before state $0$, and hence being absorbed in state $N$. 

If the initial system state is $X(0)=i<N/2$, then hitting state $N$ is equivalent to the system reaching consensus on the incorrect (minority) value. Recall from (\ref{hndef_int}) that we defined
$$
h_N(i/N)=\mathbb{P}(\exists\  t \textrm{ such that } X(t) = N | X(0) =i),
$$
as the hitting probability of state $N$ from an initial state with $i$ nodes in state 1. As discussed in Section \ref{analysis_det_prob}, we obtain by conditioning on the first jump that the hitting probabilities $h_N(\frac{i}{N})$, $i=0,1,\ldots,N$, satisfy the recursion (\ref{eq:hitting}).

In order to solve this recursion, we make use of an analogy with electrical resistor networks (see Snell and Doyle \cite{resistor}), as outlined in Section \ref{analysis_det_prob}. Let $R_i$ be the resistance between nodes $i$ and $i-1$ in a series network of $N$ resistors, and let $V_i$ denote the voltage at node $i$, i.e., the node between resistors $R_i$ and $R_{i+1}$. Set $V_0=0$ and $V_N=1$.  If we normalize $R_1=1$, and take
$$
R_{i+1} = \left(\frac{1-i/N}{i/N}\right)^{m-1} R_i,
$$
then the hitting probabilities $h_N(\frac{i}{N})$, $i=1,\ldots,N-1$ satisfy the same equations (\ref{eq:voltage}) as the voltages  $V_i$, $i=1,\ldots,N-1$  in the resistor network described above. We now solve for these voltages.

We obtain from the above recursion for the resistor values that
$$ 
R_{i} =  \prod_{j=1}^{i-1} \left(\frac{N-j}{j}\right)^{m-1} = {N-1\choose i-1}^{(m-1)},
$$
where we use the convention that an empty product is equal to 1. Now, by Ohm's law, the current through the series resistor network is given by $1/(R_1+R_2+\ldots+R_N)$. Consequently, the voltage at node $i$ is 
$V_i=\sum_{k=1}^i R_k/\sum_{k=1}^N R_k$. Since this is the same as the hitting probability $h_N(i/N)$, we obtain that
\begin{equation*}
h_N(i/N) =\frac{ \sum_{j=0}^{i-1} {N-1\choose j}^{(m-1)}}{ \sum_{j=0}^{N-1} {N-1\choose j}^{(m-1)}}.
\end{equation*}
For $\alpha\in [0,1]$ such that  $\alpha N$ is not an integer, we defined $h_N(\alpha)$ in (\ref{hndef_gen}) by linear interpolation. 
Since $h_N(i/N)$ is increasing in $i$, it follows that
\begin{equation} \label{eq:hitprob_alpha}
h_N(\alpha) \leq\frac{ \sum_{i=0}^{\lceil \alpha N \rceil-1} {N\choose i}^{(m-1)}}{ \sum_{i=0}^{N-1} {N\choose i}^{(m-1)}}.
\end{equation}

We shall obtain an upper bound on the numerator and a lower bound on the denominator in the expression above. First observe that, if $i\le \alpha N$, then
$$
{N-1\choose i-1} = \frac{i}{N-i} {N-1\choose i} \le \frac{\alpha}{1-\alpha} {N-1\choose i},
$$
and so
$$
{N-1 \choose \lceil \alpha N \rceil -i} \le \Bigl( \frac{\alpha}{1-\alpha} \Bigr)^{i-1} {N-1 \choose \lceil \alpha N \rceil-1}
$$
for all $i\ge 1$. It follows that
$$
\sum_{i=0}^{\lceil \alpha N \rceil-1} {N-1 \choose i}^{(m-1)} \le {N-1\choose \lceil \alpha N \rceil-1}^{m-1} 
\left( \sum_{i=0}^{\lceil \alpha N \rceil-1} \Bigl( \frac{\alpha}{1-\alpha} \Bigr)^{(m-1)i}\right).
$$
Now, if $\alpha \in (0,1/2)$ as in the statement of the theorem, then
$$
\sum_{i=0}^{\lceil \alpha N \rceil-1} \Bigl( \frac{\alpha}{1-\alpha} \Bigr)^{(m-1)i} \le \sum_{i=0}^{\infty} \Bigl( \frac{\alpha}{1-\alpha} \Bigr)^{(m-1)i} = c<\infty,
$$
for a constant $c$ that does not depend on $N$. Combining this with Stirling's formula, we obtain that
\begin{equation} \label{hitprob_num_ubd}
\sum_{i=0}^{\lceil \alpha N \rceil-1} {N-1\choose i}^{(m-1)} \le \Bigl( \frac{c}{\sqrt{2\pi \alpha(1-\alpha)N}} \Bigr)^{m-1} e^{(N-1)(m-1)H(\alpha)},
\end{equation}
where we recall that $H(\cdot)$ denotes the binary entropy function. Likewise, we obtain using Stirling's formula that
\begin{equation} \label{hitprob_denom_lbd}
\sum_{i=0}^{N-1} {N-1\choose i}^{(m-1)} \ge  {N-1\choose \lfloor N/2 \rfloor}^{m-1} \ge \Bigl( \frac{2}{\pi N} \Bigr)^{m-1} e^{(N-1)(m-1)H(1/2)}.
\end{equation}
Substituting (\ref{hitprob_num_ubd}) and (\ref{hitprob_denom_lbd}) in (\ref{eq:hitprob_alpha}), we obtain that
$$
h_N(\alpha) \le c e^{-(N-1)(m-1)[H(\alpha)-H(1/2)]} = c\exp(-(N-1)(m-1)D(\alpha;1/2)),
$$
where $c>0$ is a constant that may depend on $\alpha$ but does not depend on $N$. This establishes the first claim of the theorem.

In order to establish the second claim about logarithmic equivalence, we need a lower bound on $h_N(\alpha)$ that matches the above upper bound to within a term that is subexponential in $N$. We obtain such a bound from 
\begin{equation*} 
h_N(\alpha) \geq\frac{ \sum_{i=0}^{\lfloor \alpha N \rfloor-1} {N-1\choose i}^{(m-1)}}{ \sum_{i=0}^{N-1} {N-1\choose i}^{(m-1)}}.
\end{equation*}
by obtaining a lower bound on the numerator and an upper bound on the denominator. First, an upper bound on the denominator is as follows:
\begin{eqnarray*}
\sum_{i=0}^{N-1} {N-1\choose i}^{(m-1)} 
&\le& \left( \sum_{i=0}^{N-1} {N-1\choose i} \right)^{m-1} \\
&\le& 2^{(N-1)(m-1)} = \exp((N-1)(m-1)H(1/2)),
\end{eqnarray*}
since $H(1/2)=\log 2$. Next, we can get a lower bound on the numerator by simply replacing the sum with its last term, 
corresponding to $i=\lfloor \alpha N \rfloor-1$. By Stirling's formula, this gives a lower bound which is identical to the upper 
bound in (\ref{hitprob_num_ubd}), up to a constant. Hence, we get
\begin{eqnarray*}
h_N(\alpha) &\ge& c \Bigl( \frac{1}{\sqrt{2\pi \alpha(1-\alpha)N}} \Bigr)^{m-1} e^{(N-1)(m-1)[H(\alpha)-H(1/2)]} \\
&=& cN^{-(m-1)/2} \exp(-(N-1)(m-1)D(\alpha;1/2).
\end{eqnarray*}
Since this agrees with the lower bound on $h_N$ up to a polynomial in $N$, the claimed logarithmic equivalence follows. This completes the proof of the theorem.

\end{proof}

\begin{proof}[Proof of Theorem \ref{hitprob}]

We consider the $(m,d)$ algorithm. Let $X(t)$ denote the number nodes in state 1 at time $t$, and let $Z_x$ denote a  Binomial($m,x$) random variable. Then $X(t)$ evolves as continuous time Markov chain with transition rates
\begin{equation} \label{rates_general}
 X \textrm{ goes to } \begin{cases} X+1 \textrm{ with rate }(N-X) \mathbb{P}(Z_{X/N}\geq d)  ,\\
X-1 \textrm{ with rate }X \mathbb{P}(Z_{X/N}\leq (m-d)) .\end{cases}
\end{equation}
As before, we denote the hitting probability of state $N$ given that we start in state $i$ as
$$h_N(i/N) = \mathbb{P}(X(\infty) =N | X(0)=i).$$
We extend the definition of $h_N$ to $[0,1]$ by linear interpolation, as in (\ref{hndef_gen}).

By conditioning on the first transition, we observe that the hitting probabilities satisfy the recurrence relation (\ref{eq:hitting}),
with the transition probabilities $p_{i,i+1}$ and $p_{i,i-1}$ being proportional to the rates specified above. Hence, as in the
proof of Theorem \ref{mmprob}, we can solve the recurrence using an electrical network analogy. The hitting probabilities
$h_N(i/N)$ satisfy the same equations as the voltages $V_i$ in a series resistor network with $V_0=0$ and $V_N=1$,
where the resistances are related by
$$
R_{i+1}=\frac{p_{i,i-1}}{p_{i,i+1}} R_i = \frac{ i\mathbb{P}(Z_{i/N} \le m-d) }{ (N-i)\mathbb{P}(Z_{i/N}\ge d) } R_i.
$$
Using the definition of the function $g$ in the statement of Theorem \ref{hitprob}, we can rewrite this is
$R_{i+1}= g(i/N) R_i$. Taking $R_1=1$ without loss of generality, and solving for the voltages in this resistor network, we obtain
\begin{equation} \label{eq:hitprob_1}
h_N(i/N) = \frac{ \sum_{j=1}^i R_j }{ \sum_{j=1}^N R_j } = 
\frac{ \sum_{j=1}^i \prod_{k=1}^{j-1} g\bigl( \frac{k}{N} \bigr) }{ \sum_{j=1}^N \prod_{k=1}^{j-1} g\bigl( \frac{k}{N} \bigr) }.
\end{equation}
As usual, we take empty sums to be 0 and empty products to be 1. In order to obtain an upper bound on $h_N(i/N)$, we seek
an upper bound on the numerator and a lower bound on the denominator. We shall bound the numerator from above by $N$ times the largest summand, and the denominator from below by a single summand, corresponding to $i=\lfloor N/2 \rfloor$.

We assumed in the statement of Theorem \ref{hitprob} that $m\ge 2$ and $d>m/2$. Now, 
\begin{equation} \label{eq:gdef}
g(x) = \frac{ x\mathbb{P}(Z_{1-x} \ge d) }{ (1-x)\mathbb{P}(Z_{x}\ge d) } = 
\frac{ \sum_{k=d}^m {m\choose k} (1-x)^k x^{m-k+1} }{ \sum_{k=d}^m {m\choose k} x^k (1-x)^{m-k+1} }.
\end{equation}
Comparing the summands term by term, their ratio is $\bigl(\frac{x}{1-x}\bigr)^{m-2k+1}$ for index $k$. This is bigger than 1
for all $x<1/2$ because $m-2k+1$ is negative or zero for all $k\ge d>m/2$. Therefore, we conclude that $g(x)\ge 1$ for all 
$x\le 1/2$. Using this fact, we see from (\ref{eq:hitprob_1}) that the summands in the numerator are non-decreasing in $j$,
and hence the largest summand corresponds to $j=i$. Noting, also, that the function $h_N$ is non-decreasing, we obtain that
$$
h_N(\alpha) \le \frac{ N\prod_{k=1}^{\lfloor \alpha N\rfloor-1} g\bigl( \frac{k}{N} \bigr)  }{ \prod_{k=1}^{\lfloor N/2 \rfloor}
g\bigl( \frac{k}{N} \bigr)  } = \frac{N}{\prod_{k=\lfloor \alpha N\rfloor}^{\lfloor N/2 \rfloor} g(k/N) }.
$$
Taking logarithms and letting $N$ tend to infinity, we get
$$
\limsup_{N\to \infty} \frac{1}{N}\log h_N(\alpha) \le \limsup_{N\to \infty} \frac{-1}{N}\sum_{k=\lfloor \alpha N\rfloor}^{\lfloor N/2 \rfloor} \log g(k/N).
$$
In order to show that the above sum converges to the Riemann integral of $\int_{\alpha}^{1/2} \log g(x)$, as claimed in the
theorem, it suffices to show that the function $\log g$ is continuous over this compact interval. Now, from (\ref{eq:gdef}), the
function $g$ is a ratio of polynomials that are bounded away from zero on the interval $[\alpha,1/2]$, and the claim follows.
\end{proof}

\begin{proof}[Proof of Theorem \ref{hittime}]

We consider the $(m,d)$ algorithm. Let $X(t)$ denote the number nodes in state 1 at time $t$, and let $Z_x$ denote a  Binomial($m,x$) random variable. Then $X(t)$ evolves as continuous time Markov chain with transition rates given by
(\ref{rates_general}).
We shall bound the time to consensus by introducing a simpler Markov chain whose associated hitting times stochastically dominate those of the consensus process. This new chain can viewed as  a simple random walk with a negative drift  and a reflecting upper boundary. 

Fix $\epsilon \in (0,1)$. We shall define a birth-death Markov chain $Y$ on the integers $\{ 0,1, \ldots, N\}$ by specifying 
the  transition probabilities of the jump chain and the holding times in each state. The definitions involve parameters 
$\beta \in (0,1)$ and $c_1,c_2 >0$ that will be specified later. The transition probabilities of the jump chain are given by
\begin{equation} \label{jump_rates_new}
p_{i,i-1} = 1-p_{i,i+1} = \begin{cases}
\beta, & 1\le i < \frac{(1-\epsilon)N}{2}, \\
\frac{1}{2}, & \frac{(1-\epsilon)N}{2} \le i \le \frac{(1+\epsilon)N}{2}, \\
1-\beta, & \frac{(1+\epsilon)N}{2} < i \le N-1, \\ 
\end{cases}
\end{equation}
while $p_{0,0}=p_{N,N}=1$. The holding times are exponentially distributed, with rates $c_1 i$ in state $i<(1-\epsilon)N/2$, 
$c_1 (N-i)$ for $i>(1+\epsilon)N/2$, and $c_2 N$ for $(1-\epsilon)N/2 \leq i  \leq (1+\epsilon)N/2$. Note that the jump probabilities 
and holding times are symmetric about $N/2$, and that $0$ and $N$ are absorbing states. The jump chain behaves as a symmetric 
random walk in the central section, and as a random walk with drift towards the boundaries in the outer sections.

We shall show that, for a suitable choice of $\beta$, $c_1$ and $c_2$, the time to absorption of the consensus process $X(t)$ 
is stochastically dominated by that for $Y_{\epsilon}(t)$. By explicitly bounding the latter, we shall obtain a bound on the time
to reach consensus. Recall that $Z_x$ denotes a Binomial($m,x$) random variable. We take
\begin{equation} \label{rates_def}
c_1=\mathbb{P}(Z_{\frac{1-\epsilon}{2}} \le m-d), \; c_2 = \mathbb{P}(Z_{\frac{1-\epsilon}{2}} \ge d), \;
\beta = \frac{ (1-\epsilon) c_2 }{ (1-\epsilon)c_2 + (1+\epsilon)c_1 }.
\end{equation}
Define $\tilde X(t) = \min \{ X(t), N-X(t) \}$, and $\tilde Y(t) = \min \{ Y(t), N-Y(t) \}$. Then $\tilde X$ 
and $\tilde Y$ are Markov processes on $\{ 0,1,\ldots,\lfloor N/2 \rfloor \}$, with the same jump rates and holding times
as $X$ and $Y$ respectively, except at state $\lfloor N/2 \rfloor$ where there is no upward jump, and where the holding
time is suitably modified to account for the censored jumps. Moreover, the modified processes $\tilde X$ and $\tilde Y$
have a unique absorbing state at 0, and the same time to absorption as $X$ and $Y$ respectively.

The claim of the theorem will be immediate from the following two lemmas. In fact, they establish that $\tau^0_N(x)$ is no
bigger than $\tilde \tau^0_N(x)$ (defined below), and that the latter is $O(\log N)$. To show that $\tau^0_N(x)$ is in fact
$\Theta(\log N)$, observe that for consensus to be reached, either every node in the minority state has to have updated its
opinion at least once, or every node in the majority state needs to have done so. As each node updates its opinion after
independent Exp(1) times, this involved the maximum of $O(N)$ independent Exp(1) random variables, which has mean
of order $\log N$.
\end{proof}
\begin{lemma} \label{lem:stoch_domination}
For given initial conditions $\tilde X(0) \le \tilde Y(0)$, the stochastic processes $\tilde X(t)$ and $\tilde Y(t)$ 
can be coupled in such a way that $\tilde X(t) \le \tilde Y(t)$ for all $t\ge 0$. In particular, the process $\tilde X$
first falls below level $\lfloor\alpha N \rfloor$, and likewise hits the absorbing state 0, no later than $\tilde Y$ does the same.
\end{lemma}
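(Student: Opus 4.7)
The plan is to establish the lemma via a standard monotone coupling of the two birth--death chains $\tilde X$ and $\tilde Y$ on the common state space $\{0,1,\ldots,\lfloor N/2\rfloor\}$. I would construct a joint Markov process on the ordered pairs $\{(x,y) : 0 \le x \le y \le \lfloor N/2 \rfloor\}$ whose two marginals follow the prescribed dynamics of $\tilde X$ and $\tilde Y$, and which cannot leave the ordered set. Off the diagonal, I let the two coordinates run on independent clocks, so that each jump changes only one coordinate by $\pm 1$ and the gap can only tighten or widen, never reverse. On the diagonal at a common state $k$, I use a shared clock and split its events into four classes: both chains up (rate $\min(a_{\tilde X}(k), a_{\tilde Y}(k))$), both down (rate $\min(b_{\tilde X}(k), b_{\tilde Y}(k))$), $\tilde Y$ alone up (rate $a_{\tilde Y}(k) - a_{\tilde X}(k)$), and $\tilde X$ alone down (rate $b_{\tilde X}(k) - b_{\tilde Y}(k)$), where $a$ and $b$ denote up and down rates. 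If the last two rates are non-negative, the marginals are correct and every admissible transition preserves $x \le y$, so the coupling succeeds.

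The substantive step is therefore the pointwise verification of $a_{\tilde X}(k) \le a_{\tilde Y}(k)$ and $b_{\tilde X}(k) \ge b_{\tilde Y}(k)$ at every $k \in \{0,\ldots,\lfloor N/2\rfloor\}$. Two preliminary observations make this tractable. First, by symmetry of the rates of $X$ and $Y$ under $i \mapsto N-i$, the folded processes $\tilde X$ and $\tilde Y$ are genuine Markov chains on $\{0,\ldots,\lfloor N/2\rfloor\}$ with the same transition rates as the halves of $X$ and $Y$ below $N/2$, apart from a modified holding time at $\lfloor N/2\rfloor$. Second, the hypothesis $2d > m \ge d$ implies that $x \mapsto \mathbb{P}(Z_x \ge d)$ is non-decreasing and $x \mapsto \mathbb{P}(Z_x \le m-d)$ is non-increasing on $[0,1/2]$, so the binomial probabilities appearing in the $\tilde X$-rates are uniformly bounded, on each of the three regions defining $\tilde Y$, by the corresponding constants $c_1$ and $c_2$. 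In the outer region $k < (1-\epsilon)N/2$ the required inequalities then reduce to comparing $(N-k) c_2$ and $k c_1$ against $(1-\beta) c_1 k$ and $\beta c_1 k$, which hold by the calibration of $\beta$ in (\ref{rates_def}); the bulk region and the boundary state $\lfloor N/2\rfloor$ are handled analogously, using the symmetric rate $\tfrac{1}{2} c_2 N$ and the ``no upward jump'' modification respectively.

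The main obstacle will be matching the $\tilde X$-rates to the piecewise-constant $\tilde Y$-rates across the thresholds $k \approx (1\pm \epsilon)N/2$, where the $\tilde Y$-rates change discontinuously while the $\tilde X$-rates vary continuously in $k/N$. The constants $c_1$, $c_2$ are evaluated precisely at $x = (1-\epsilon)/2$, and the $(1 \pm \epsilon)$ weights in the definition of $\beta$ are exactly what produces the slack needed to make the monotonicity inequalities survive on both sides of the threshold. Once the pointwise rate comparison is in hand, the final assertion that $\tilde X$ falls below $\lfloor \alpha N\rfloor$ and hits $0$ no later than $\tilde Y$ does is immediate from the pathwise inequality $\tilde X(t) \le \tilde Y(t)$: as soon as $\tilde Y$ first crosses any downward threshold, $\tilde X$ has already done so.
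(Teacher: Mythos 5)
Your overall architecture (fold the chains, reduce to a monotone Markovian coupling on the diagonal, then verify a pointwise rate comparison) is the natural one, and you have correctly located the crux: for a nearest-neighbour coupling to preserve $\tilde X \le \tilde Y$, you need, at every common state $k$, that the up-rate of $\tilde X$ is at most the up-rate of $\tilde Y$ and the down-rate of $\tilde X$ is at least that of $\tilde Y$. The down-rate half is fine. The problem is that the up-rate inequality $a_{\tilde X}(k) \le a_{\tilde Y}(k)$, which you assert ``holds by the calibration of $\beta$'' and for which the $(1\pm\epsilon)$ weights supposedly provide ``exactly the slack needed,'' is in fact \emph{false} for the chain $Y$ as defined in (\ref{jump_rates_new})--(\ref{rates_def}). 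Take $k$ just below the threshold $(1-\epsilon)N/2$. There $a_{\tilde X}(k) = (N-k)\,\prob(Z_{k/N}\ge d) \approx \tfrac{(1+\epsilon)N}{2}c_2$, while $a_{\tilde Y}(k) = (1-\beta)c_1 k \approx (1-\beta)c_1\tfrac{(1-\epsilon)N}{2}$; the ratio $a_{\tilde X}(k)/a_{\tilde Y}(k)$ works out to be strictly greater than $1$ (e.g.\ for $(m,d)=(2,2)$ and small $\epsilon$ it is close to $2$). The reason is structural: $Y$ is built to have a \emph{smaller total exit rate} and a \emph{larger up-probability} than $X$ in the outer region, and these two effects push the up-rate in opposite directions; near the threshold the first effect wins and $Y$ actually moves up more slowly than $X$. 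This is not a repairable detail of your construction: if $a_{\tilde X}(k) > a_{\tilde Y}(k)$ for some $k$, then starting both chains at $k$ one has $\prob(\tilde X(t)\ge k+1) > \prob(\tilde Y(t)\ge k+1)$ for small $t$, so no coupling whatsoever can keep $\tilde X(t)\le\tilde Y(t)$ from that initial condition. Your proposed generator comparison therefore cannot be completed for the $Y$ of the paper.

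The paper's proof deliberately avoids the raw rate comparison. It compares the two chains through a different decomposition: (i) the \emph{jump probabilities} of the embedded discrete-time chains (the down-probability of $\tilde X$ at each state dominates that of $\tilde Y$), and (ii) the \emph{holding times} (the exit rate of $\tilde X$ from each state dominates that of $\tilde Y$, so its holding times are stochastically smaller). These are exactly the inequalities that do hold, and they are what feed into Lemma \ref{lem:hit_time_bound}, where the hitting time of $\tilde Y$ is computed as (number of visits to each state for the jump chain) $\times$ (mean holding time per visit). To fix your write-up you should either switch to this jump-chain-plus-holding-time comparison (and argue the hitting-time domination from it), or redefine the comparison chain $Y$ so that its up- and down-\emph{rates}, not just its jump probabilities and exit rates, sandwich those of $X$ -- at the cost of redoing the computation in Lemma \ref{lem:hit_time_bound} for the new chain. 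As written, the substantive step of your argument fails at precisely the threshold you flagged as the main obstacle.
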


\begin{lemma} \label{lem:hit_time_bound}
Let $x\in (0,1/2)$ and $\alpha \in (0,x)$ be given. Fix $\epsilon \in (0,1-2x)$, and let the processes $Y$ and $\tilde Y$
be defined as above. Let
\begin{eqnarray*}
\tilde \tau^{\alpha}_N(x) &=& \mathbb{E}_{\lfloor xN \rfloor} (\inf \{t>0: \tilde Y(t)  \le \alpha N), \\
\tilde \tau^0_N(x) &=& \mathbb{E}_{\lfloor xN \rfloor} (\inf \{t>0: \tilde Y(t) =0),
\end{eqnarray*}
where the subscript on the expectation denotes the initial state $\tilde Y(0)$. Then, $\tilde \tau^{\alpha}_N(x)=O(1)$
and $\tilde \tau^0_N(x)=O(\log N)$.
\end{lemma}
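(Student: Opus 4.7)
The plan is to compute the two expected hitting times $\tilde \tau^\alpha_N(x)$ and $\tilde \tau^0_N(x)$ directly, by writing down the standard first-step recursion for the birth-death chain $\tilde Y$ and solving it explicitly. The analysis will exploit the two-section structure of $\tilde Y$: a strongly biased random walk (drift toward 0) in the lower section, and a symmetric random walk in the central section, with a reflecting upper boundary at $\lfloor N/2\rfloor$.

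I will first set up the recursion. Let $h_y = \mathbb{E}_y[\inf\{t: \tilde Y(t) = 0\}]$ and define the discrete derivative $g_i := h_i - h_{i-1}$. Writing $\mu_i$ and $\lambda_i$ for the down and up rates of $\tilde Y$ at state $i$, standard first-step analysis on the jump times gives
\begin{equation*}
g_i = \frac{1}{\mu_i} + \frac{\lambda_i}{\mu_i}\, g_{i+1},
\end{equation*}
with terminal condition $g_{\lfloor N/2\rfloor} = 1/\mu_{\lfloor N/2\rfloor}$ coming from the fact that $\tilde Y$ makes only downward jumps from the top of its state space.

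The next step is to solve this recursion iteratively from top to bottom. In the central section $i \in [\lceil(1-\epsilon)N/2\rceil, \lfloor N/2\rfloor]$ the walk is symmetric with $\lambda_i = \mu_i = c_2 N/2$, so the recursion becomes $g_i = 2/(c_2 N) + g_{i+1}$; iterating across the $\Theta(\epsilon N)$ states of this section yields $g_{\lceil(1-\epsilon)N/2\rceil} = O(1)$, uniformly in $N$. In the lower section $1 \le i < (1-\epsilon)N/2$, the rates are $\mu_i = c_1 i \beta$ and $\lambda_i = c_1 i (1-\beta)$, with the downward drift encoded as $r := (1-\beta)/\beta < 1$. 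Iterating the recursion from the top of the lower section downward gives
\begin{equation*}
g_i = \frac{1}{c_1 \beta}\sum_{k=0}^{\lceil(1-\epsilon)N/2\rceil - 1 - i}\frac{r^k}{i+k} + r^{\lceil(1-\epsilon)N/2\rceil - i}\, g_{\lceil(1-\epsilon)N/2\rceil}.
\end{equation*}
Bounding the first sum by a geometric series gives $g_i \le 1/\bigl(c_1 i (2\beta-1)\bigr)$, while the residual term is $O\bigl(r^{\Theta(N)}\bigr)$ for $i \le xN$, hence exponentially small in $N$.

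Finally, using $h_y = \sum_{i=1}^{y} g_i$ together with the birth-death identity $\tilde \tau^\alpha_N(x) = h_{\lfloor xN\rfloor} - h_{\lfloor \alpha N\rfloor}$ (valid because $\tilde Y$ has unit jumps, so it must visit $\lfloor\alpha N\rfloor$ on the way to $0$), one obtains
\begin{equation*}
\tilde \tau^\alpha_N(x) = \sum_{i=\lfloor\alpha N\rfloor+1}^{\lfloor xN\rfloor} g_i = O\bigl(\log(x/\alpha)\bigr) = O(1),
\qquad
\tilde \tau^0_N(x) = \sum_{i=1}^{\lfloor xN\rfloor} g_i = O(\log N).
\end{equation*}
The main obstacle is verifying cleanly that the drift in the lower section really is toward 0, that is, $r < 1$; this requires combining the stochastic monotonicity of $\mathrm{Bin}(m,p)$ in $p$ with the hypothesis $d > m/2$ to show $c_1 > c_2$, and tracing this through the definition of $\beta$. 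A secondary subtlety is the boundary matching between the two sections: the residual term arising from possible excursions of $\tilde Y$ into the central section (which contribute a polynomial-in-$N$ factor) must be shown not to overwhelm the main sum, and this is guaranteed by the exponentially small factor $r^{\Theta(N)}$ so long as the hitting time calculations are set up before taking $N\to\infty$.
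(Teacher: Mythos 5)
Your argument is correct and arrives at exactly the same bounds as the paper, but by a different (though closely parallel) elementary route. The paper first collapses the symmetric central block into a single state of an auxiliary process $\hat Y$, assigns that state the mean exit time $\epsilon/c_2$ of the symmetric walk, and then decomposes the absorption time by states visited, as $\sum_j f_{\lfloor xN\rfloor,j}\, n_j\, t_j$, bounding the expected number of visits $n_j$ by $1/(2\beta-1)$ via explicit gambler's-ruin return probabilities. You instead decompose by levels crossed, solving the hitting-time recursion through the increments $g_i=h_i-h_{i-1}$, which satisfy the first-order recursion $g_i=1/\mu_i+(\lambda_i/\mu_i)g_{i+1}$; your geometric bound $g_i\le 1/\bigl(c_1 i(2\beta-1)\bigr)$ together with the $O(1)$ contribution of the central section reproduces the paper's bound $\frac{1}{2\beta-1}\bigl(\sum_j \frac{1}{jc_1}+\frac{\epsilon}{c_2}\bigr)$ term for term. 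Your version avoids introducing $\hat Y$ and the return-probability computation (which the paper itself calls tedious), at the modest cost of tracking the residual term $r^{K-i}g_K$ across the section boundary; it also yields the identity $\tilde\tau^\alpha_N(x)=h_{\lfloor xN\rfloor}-h_{\lfloor\alpha N\rfloor}$ immediately, whereas the paper only sketches the $\alpha>0$ case.

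One caution on the point you flag as the main obstacle: $r<1$ does not follow from $c_1>c_2$ alone. Given the holding rates, the inequality actually needed is $(1-\epsilon)c_1>(1+\epsilon)c_2$, which is precisely the statement $g\bigl(\tfrac{1-\epsilon}{2}\bigr)>1$ already established in the proof of Theorem \ref{hitprob} under the hypothesis $d>m/2$; you should invoke that rather than bare stochastic monotonicity. Note also that the display (\ref{rates_def}) as printed gives $\beta=\frac{(1-\epsilon)c_2}{(1-\epsilon)c_2+(1+\epsilon)c_1}<\tfrac{1}{2}$ precisely because $c_1>c_2$, so the roles of $c_1$ and $c_2$ there must be interchanged for the downward-drift claim $\beta>\tfrac12$ (on which both your proof and the paper's rely, and which Lemma \ref{lem:stoch_domination} also needs) to hold.
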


\begin{proof}[Proof of Lemma \ref{lem:stoch_domination}]

Let $i< N/2$. The jump probability from $i$ to $i-1$ for the Markov process $\tilde X(t)$ is the same as for $X(t)$ and
is given by
$$
\frac{ i\mathbb{P}(Z_{i/N} \le m-d) }{  i\mathbb{P}(Z_{i/N} \le m-d) +  (N-i)\mathbb{P}(Z_{i/N} \ge d)}.
$$
If $i<(1-\epsilon)N/2$, then this quantity is no smaller than the corresponding jump probability $\beta$ for $\tilde Y(t)$, defined in (\ref{rates_def}); this is because $\mathbb{P}(Z_{i/N} \le m-d)$ is decreasing in $i$ while $\mathbb{P}(Z_{i/N} \ge d)$ is 
increasing. Also, if $d$ is bigger than $m/2$, as we assume, then, for all $i$ between $(1-\epsilon)N/2$ and $N/2$, the jump 
probability of $X$ from $i$ to $i-1$ is no smaller than a half, since 
\begin{equation} \label{drift_ineq_1}
\mathbb{P}(Z_{i/N} \le m-d)  \ge \mathbb{P}(Z_{(N-i)/N} \le m-d) = \mathbb{P}(Z_{i/N} \ge d).
\end{equation} 

Next, we compare the holding times in different states for the two processes. The rate of moving out of state 
$i$ for the $\tilde X$ process is 
$$
i\mathbb{P}(Z_{i/N}\le m-d) + (N-i)\mathbb{P}(Z_{i/N} \ge d),
$$
whereas for the $\tilde Y$ process it is 
\begin{eqnarray*}
i\mathbb{P}(Z_{(1-\epsilon)N/2} \le m-d), && i<(1-\epsilon)N/2 \\
N\mathbb{P}(Z_{(1-\epsilon)N/2} \ge d), && (1-\epsilon)N/2\le i <\lfloor N/2 \rfloor. 
\end{eqnarray*}
As $\mathbb{P}(Z_{i/N}\le m-d)$ 
is decreasing in $i$, it is clear that this rate is greater for the $\tilde X$ process if $i<(1-\epsilon)N/2$. It can be seen
using (\ref{drift_ineq_1}) that it is also greater for $(1-\epsilon)N/2 \le i < \lfloor N/2 \rfloor$.

Thus, we have shown that the jump probability from $i$ to $i-1$  for $\tilde X$ is no smaller than that for $\tilde Y$,
and that the holding time in each state is no greater (in the standard stochastic order). The existence of the claimed coupling
follows from these facts. Indeed, such a coupling can be constructed by letting the processes involve independently when
$\tilde X(t) \neq \tilde Y(t)$, but, when they are equal, sampling the residual holding times and the subsequent jumps
jointly to respect the desired ordering.
\end{proof}

\begin{proof}[Proof of Lemma \ref{lem:hit_time_bound}]

Define $\hat Y(t) = \min \{ \tilde Y(t), \lceil \frac{(1-\epsilon)N}{2} \rceil \}$. Then, $\hat Y(t)$ is a semi-Markov process.
The associated jump chain is Markovian with the same transition probabilities as the $\tilde Y(t)$ process, except that the only
possible transition from $\lceil \frac{(1-\epsilon)N}{2} \rceil $ is to $\lceil \frac{(1-\epsilon)N}{2} \rceil -1$. The holding times in 
all states other $\lceil \frac{(1-\epsilon)N}{2} \rceil$ are exponential with the same rates as in the process $Y(t)$, but the 
holding times in $\lceil \frac{(1-\epsilon)N}{2} \rceil$ have the distribution of the exit time of the process $Y(t)$ from the 
interval $\{ \lceil \frac{(1-\epsilon)N}{2} \rceil, \ldots, \lceil \frac{(1+\epsilon)N}{2} \rceil \}$ in each visit.

Let $t_j$ denote the mean time spent by the process $\hat Y(t)$ in state $j$ during each visit. Then $t_j = 1/(jc_1)$ for 
$j<\lceil \frac{(1-\epsilon)N}{2} \rceil$. In order to compute $t_{\lceil (1-\epsilon)N/2 \rceil}$, we first note that, by well known 
results for the symmetric random walk, the mean number of steps for $Y(t)$ to exit the interval 
$\{ \lceil \frac{(1-\epsilon)N}{2} \rceil \}, \ldots, \lceil \frac{(1+\epsilon)N}{2} \rceil \}$ after entering it at the boundary
is $\lceil \epsilon N \rceil$. Moreover, the mean holding time in each state in this interval is $1/(Nc_2)$ by definition. Hence, 
the mean exit time for $Y(t)$ from this interval, which is also the mean holding time for $\hat Y(t)$ in 
$\lceil \frac{(1-\epsilon)N}{2} \rceil$, is $\epsilon/c_2$. This is a constant that does not depend on $N$.

Now, $\tilde \tau_N^0(x)$ is the mean time for $\tilde Y$, and hence $\hat Y$, to hit 0. By decomposing this into the
number of visits to each intermediate state, and the expected time in each state during each visit, we can write
\begin{equation} \label{tau0_1}
\tilde \tau_N^0(x) = \sum_{j=1}^{\lceil (1-\epsilon)N/2 \rceil} f_{\lfloor xN \rfloor,j} n_j t_j,
\end{equation}
where $f_{ij}$ denotes the probability that $\hat Y$, started in state $i$, hits state $j$ before 0, and $n_j$ denotes
the mean number of visits to state $j$ conditional on ever visiting it. If we let $f_{jj}$ denote the return probability to
state $j$ before hitting 0, then the number of visits to $j$ is geometrically distributed with parameter $f_{jj}$ by the Markov
property, and so $n_j = 1/(1-f_{jj})$. Using well known results for the gambler's ruin problem, we have
$$
f_{ij}=\begin{cases}
\frac{ (\frac{\beta}{1-\beta})^i-1 }{ (\frac{\beta}{1-\beta})^j-1 }, & i<j \cr
1, & i>j,
\end{cases}
$$
where we recall that $\beta>1/2$ is the transition probability from $i$ to $i-1$ for the jump chain associated with $\hat Y(t)$,
which is a biased random walk with reflection at $\lceil (1-\epsilon) N/2 \rceil$. By conditioning on the first step, we also have
$$
f_{jj}=\begin{cases}
1-\beta+\beta f_{j-1,j}, & j\neq \lceil (1-\epsilon) N/2 \rceil, \cr
f_{j-1,j}, & j = \lceil (1-\epsilon) N/2 \rceil.
\end{cases}
$$
Solving for $f_{jj}$ and substituting in $n_j=1/(1-f_{jj})$, we obtain after some tedious calculations that
$$
n_j = \frac{1-(\frac{1-\beta}{\beta})^j}{2\beta-1} \le \frac{1}{2\beta-1}.
$$
But $\beta$ is a constant that does not depend on $N$, and is strictly bigger than a half under the assumption that
$d>m/2$ made in the statement of the theorem. Hence, we see that the expected number of returns to any state
is bounded uniformly by a finite constant.

Now, substituting the results obtained above for $n_j$ and $t_j$ in (\ref{tau0_1}), and noting that $f_{\lfloor xN \rfloor,j} \le 1$
for all $j$ as it is a probability, we obtain that
$$
\tilde \tau^0_N(x) \le \frac{1}{2\beta-1} \Bigl( \sum_{j=1}^{\lceil (1-\epsilon)N/2 \rceil-1} \frac{1}{jc_1} + \frac{\epsilon}{c_2}
\Bigr) = O(\log N).
$$
In fact, a slightly more careful analysis shows that the time to absorption is $O(\log \tilde Y(0))$, which is a tighter bound
if the initial condition grows slower than a fraction of $N$, i.e., if the population is already close to consensus to start with.

The derivation of the upper bound on $\tilde \tau^{\alpha}_N(x)$ is very similar, except that we get an analogue of (\ref{tau0_1})
where the sum only runs over $j\ge \alpha N$. Moreover, the terms in the sum have the interpretation of hitting probabilities and
number of returns before crossing the level $\alpha N$, which is bounded by the hitting probabilities and number of returns before
hitting 0. The details are omitted.
\end{proof}

\begin{proof}[Proof of Theorem \ref{random}]
Let $X(t)$ be the number nodes in state 1 at time $t$. The dynamics of $X$ are governed by the continuous time Markov chain. Before stating the transition rates we define the two functions $p_1$ and $p_2$ by
$$p_1(m,d,x)=\mathbb{P}( Z_{m,x}\leq(m-d))$$
and
$$p_2(m,d,x) =\mathbb{P}( Z_{m,x}\geq d)$$
where $Z_{m,x}$ denotes a random variable with the Bin$(m,x)$ distribution as before, but we now make the dependence on $m$ explicit in the notation.
Then the transition rates of the Markov Chain are
$$ X \textrm{goes to } \begin{cases} X+1 \textrm{ with rate }(N-X)  \mathbb{E}_{(M,D)}(p_2(M,D,X/N)),\\
X-1 \textrm{ with rate }X \mathbb{E}_{(M,D)}(p_1(M,D,X/N)) ,\end{cases} $$
where $\mathbb{E}_{(M,D)}$ is expectation taken with respect to the random choice of $(M,D)$, the algorithm to be used. Finding the hitting probability and the times to consensus follows exactly the same process as for the deterministic case with the minor changes. Therefore we do not include the proofs here.  
\end{proof}


\begin{thebibliography}{1}

\bibitem{moaz} M.A. Abdullah and M. Draief, ``Consensus on the Initial Global Majority by Local Majority Polling for a Class of Sparse Graphs'', arXiv:1209.5025.

\bibitem{benezit}
F. Benezit, P. Thiran, and M. Vetterli, ``Interval consensus: From quantized gossip to voting'', \emph{IEEE ICASSP}, 3661--3664, 2009.

\bibitem{cooper} C. Cooper, R.Elsasser, H. Ono and T. Radzik, ``Coalescing random walks and voting on connected graphs'', arXiv:1204.4106.

\bibitem{donnelly}
P. Donnelly and D. Welsh, ``Finite particle systems and infection models", \emph{Math. Proc. Camb. Phil. Soc.}, 94: 167--182, 1983.

\bibitem{resistor}
P. G. Doyle, and J. L. Snell, \emph{Random Walks and Electrical Networks}, The Mathematical Association of America, Washington D.C., 1984.

\bibitem{milan2} 
M. Draief and M. Vojnovic, ``Convergence Speed of Binary Interval Consensus '', \emph{SIAM Journal Control and Optimization}, 50: 1087--1109, 2012.

\bibitem{peleg}
Y. Hassin and D. Peleg, ``Distributed probabilistic polling and applications to proportionate agreement", in \emph{Proc. 26th Internat. Colloq. on Automata, Languages \& Prog.},  402--411, 1999.

\bibitem{montanari}
Y.Kanoria and A.Montanari, ``Majority dynamics on trees and the dynamic cavity method'', \emph{Annals of Applied Probability}, 21(5): 1694-1748, 2011.

\bibitem{kashyap}
A. Kashyap, T. Basar and R. Srikant, ``Quantized Consensus", \emph{Automatica}, 43(7): 1192--1203, 2007. 

\bibitem{shah+aoyama}
D. Mosk-Aoyama and D. Shah, ``Fast Distributed Algorithms for Computing Separable Functions", \emph{IEEE Transactions on Information Theory}, 54(7): 2997--3007, 2008.

 \bibitem{mossel}
E. Mossel, J. Neeman and O. Tamuz,``Majority dynamics and aggregation of information in social networks'' , \emph{Autonomous Agents and Multi-Agent Systems }.

\bibitem{alex+john}
A. Olshevsky and  J.Tsitsiklis, ``Convergence Speed in Distributed Consensus and Averaging'', \emph{SIAM Journal Control and Optimization}, 48:33--55, 2009.

\bibitem{milan}
E. Perron, D. Vasudevan and M. Vojnovic, ``Using Three States for Binary Consensus on Complete Graphs", in \emph{IEEE Infocom}, IEEE Communications Society, 2009.

\bibitem{shang} 
S.Shang, P.Cuff, P. Hui and S. Kulkarni, ``An Upper Bound on the Convergence Time for
Quantized Consensus'', in \emph{IEEE Infocom}, IEEE Communications Society, 2013.

\bibitem{crowds}
J. Surowiecki, \emph{The Wisdom of Crowds: Why the Many Are Smarter Than the Few and How Collective Wisdom Shapes Business, Economies, Societies and Nations}, Doubleday, 2004.

\end{thebibliography}
\end{document}